\expandafter\def\csname input@path\endcsname{%
    {VMSTA/sty/},
    {VMSTA/img/},
    {VMSTA/bib/}
}

\RequirePackage{xparse,expl3}
\documentclass[numbers,compress]{vmsta2}

\volume{0}
\issue{0}
\pubyear{2026}
\articletype{research-article}

\usepackage[utf8]{inputenc}
\usepackage[T2A,T1]{fontenc}
\usepackage[ukrainian,english]{babel}
\usepackage{dsfont}

\DeclareMathOperator{\Dom}{Dom}

\newtheorem{theorem}{Theorem}[section]
\newtheorem{lemma}{Lemma}[section]
\newtheorem{corollary}{Corollary}[section]
\newtheorem{definition}{Definition}[section]
\newtheorem{example}{Example}[section]

\newtheorem{remark}{Remark}[section]

\begin{document}

\begin{frontmatter}

\pretitle{Research Article}

\title{Integrability of a composition of functions}

\begin{aug}
\author[a]{\inits{A.}\fnms{Alexander }~\snm{Kukush}\thanksref{cor1}\ead[label=e1]{akukush@imath.kiev.ua}}
\thankstext[type=corresp,id=cor1]{Corresponding author.}
\author[b]{\inits{M.  }\fnms{Mykhailo }~\snm{Naumov}\ead[label=e2]{mnaoumov@gmail.com}}

\address[a]{\institution{Institute of Mathematics of NAS of Ukraine}, \cny{Ukraine}}
\address[b]{\institution{Volia Software}, \cny{Mexico}}
\end{aug}

\begin{abstract}
The paper deals with the Riemann and Lebesgue integrability of functions. Sufficient conditions and criteria are stated for the integrability of a composition of functions. Relevant counterexamples are constructed based on  Cantor's perfect sets and the Cantor staircase function. The main tool of research is Lebesgue's criterion for Riemann integrability.
\end{abstract}

\begin{keyword}
\kwd{Riemann integral}
\kwd{Lebesgue integral}
\kwd{absolutely continuous function}
\kwd{Cantor's set}
\kwd{Cantor staircase function}
\end{keyword}

\begin{keyword}[class=MSC]%
\kwd{26A42}
\kwd{26A46}
\kwd{28A20}
\kwd{28A25}
\end{keyword}

\end{frontmatter}

\section{Introduction}

Let $f:[a,b] \to [c,d]$, $f$ be a Riemann integrable function on $[a,b]$ and $G$ be a continuous function on $[c,d]$, then the composition $G\circ f$ is Riemann integrable; see \cite{Olmsted}, p. 153, Problem 55. This is a direct consequence of Lebesgue's criterion for Riemann integrability \cite{Bogachev}, stated in Theorem \ref{criteria} below.

On the other hand, a composition of  Riemann integrable functions need not be such.  A counterexample is constructed in Gelbaum and Olmsted (2003), Chap. 4, Sect. 9, where $f$ is the Riemann   popcorn function  equal to $1/q$ if $x\in [0, 1]$ is rational and expressed as $p/q$ in the lowest terms and equal to zero if $x\in [0, 1]$ is irrational, and $G$ is the indicator  of $(0,1]$.  Both functions are Riemann integrable in $[0,1]$, but $G\circ f$ is not.

It is interesting to give mild conditions for the integrability of a composition of functions. In the present paper, we mainly deal with Riemann integrability. Here, the main tool is the above-mentioned Lebesgue's criterion. In addition, we give sufficient conditions for the Lebesgue integrability of a composition. Here, it is crucial to provide that the preimage of a Lebesgue measurable set under the internal function is Lebesgue measurable as well. We construct counterexamples that show that certain conditions of statements cannot be avoided. Most of the results can be extended to functions of several variables. 

The second author reported the results in Kyiv at the 9th conference in Stochastic Processes and Probability "The Skorokhod Readings" in June 2026.

The paper is organized as follows. Section 2 provides the conditions under which a composition of functions is Riemann integrable, and some counterexamples are constructed. Section 3 involves continuous  and absolutely continuous functions. Section 4 deals with the Lebesgue integrability on the real line, and Section 5 concludes.

We give the notation used in the paper.

\begin{tabular}{@{}l@{\quad}p{0.78\textwidth}@{}}

$\mathds{1}_A$ & indicator function of a set $A$ \\
$\Dom(f)$ & the domain of $f$; for $f: A \to B, \Dom(f) = A$ \\
$f^{-1}A$ & preimage of a set $A$ under a mapping $f$ \\
$D(f)$ & the discontinuity set $\{ x \in \Dom(f)$: $f$  is discontinuous at point   $x$\} \\
$C(A)$ & the space of continuous functions on a set $A$ \\
$C[a,b]$ & the space of continuous functions on $[a,b]$ \\
$AC[a,b]$ & the space of absolutely continuous functions on $[a,b]$ \\
$R[a,b]$ & the space of Riemann integrable functions on $[a,b]$ \\
$\lambda$ & Lebesgue measure on $\mathbb{R}$ \\
$\lambda^{*}$ &  Lebesgue outer measure on $\mathbb{R}$ \\
$B(\mathbb{R})$ & sigma-algebra of Borel sets on $\mathbb{R}$ \\
$S$ &  sigma-algebra of Lebesgue measurable sets on $\mathbb{R}$ \\
$C$ & Cantor's perfect  set with $\lambda(C) = 0$ \cite{Gelbaum} Chap. 8, Section 1\\
$\theta(x)$ & the Cantor staircase function \cite{Gelbaum} Chap. 8, Section 15\\
$K$ & "fat" Cantor's perfect set with $\lambda(K) = 1/2$ \cite{Gelbaum} Chap. 8, Section 4\\
$L[a,b]$ & the space of Lebesgue integrable functions on $[a,b]$ \\
$L(A)$ & the space of Lebesgue integrable functions on a Lebesgue measu\-rable set $A$
\end{tabular}

\section{Conditions for Riemann integrability of a composition}

In Sections 2 and 3, we deal with the functions $f: [a,b] \to [c,d]$,  $G :[c,d] \to \mathbb{R}$, and $h := G \circ f: [a,b] \to \mathbb{R},$ $h$ denotes a composition of functions. Thus, $f$ is an internal function and $G$  an  external one.
We consider the conditions under which  $h \in R[a,b]$.
Our main tool is as follows.

\begin{theorem}[Lebesgue's criterion for Riemann integrability, see \cite{Bogachev}]\hfill
\label{criteria}

Let $g$ be a bounded real function on $[a,b]$.
Then $g \in R[a,b]$  if and only if $\lambda(D(g)) = 0$.

\end{theorem}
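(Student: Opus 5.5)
We prove Lebesgue's criterion (Theorem~\ref{criteria}) through the oscillation of $g$. For $x\in[a,b]$ put
\[
\omega_g(x)=\lim_{\delta\to 0+}\sup\{|g(y)-g(z)|:\ y,z\in[a,b],\ |y-x|<\delta,\ |z-x|<\delta\},
\]
and for $\varepsilon>0$ let $D_\varepsilon=\{x:\omega_g(x)\ge\varepsilon\}$. Then $g$ is continuous at $x$ exactly when $\omega_g(x)=0$, so $D(g)=\bigcup_{n\ge1}D_{1/n}$. Each $D_\varepsilon$ is closed, because $\{\omega_g<\varepsilon\}$ is relatively open in $[a,b]$, and hence each $D_\varepsilon$ is compact. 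Throughout we use Darboux's criterion: a bounded $g$ belongs to $R[a,b]$ if and only if for every $\eta>0$ there is a partition $P$ with $U(g,P)-L(g,P)<\eta$.

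\textbf{Necessity.} Let $g\in R[a,b]$ and fix $\varepsilon,\eta>0$. Choose a partition $P$ with $U-L<\varepsilon\eta$. Consider the subintervals of $P$ whose \emph{interior} meets $D_\varepsilon$. On each such subinterval the oscillation $M_i-m_i$ is at least $\varepsilon$, since the interior is a neighbourhood of the point of $D_\varepsilon$ it contains. Therefore their total length is less than $\eta$. The remaining points of $D_\varepsilon$ lie among the finitely many partition points, which have measure zero. Hence $\lambda^*(D_\varepsilon)\le\eta$ for every $\eta>0$, so $\lambda(D_\varepsilon)=0$. Taking the countable union gives $\lambda(D(g))=0$.

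\textbf{Sufficiency.} Let $\lambda(D(g))=0$ and $|g|\le M$, and fix $\varepsilon>0$. Since $D_\varepsilon\subset D(g)$ is compact and null, it is covered by finitely many open intervals of total length less than $\varepsilon$. The complement of this union in $[a,b]$ is a finite union of closed intervals $J_k$. At every point $x\in J_k$ we have $\omega_g(x)<\varepsilon$, so $x$ has a neighbourhood on which the oscillation of $g$ is less than $\varepsilon$. By compactness of $J_k$ (a Lebesgue-number argument), $J_k$ can be partitioned into finitely many subintervals, each with oscillation less than $\varepsilon$. We combine these subintervals with the covering intervals, intersected with $[a,b]$, into a single partition $P$. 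Then
\[
U(g,P)-L(g,P)\le 2M\varepsilon+\varepsilon(b-a),
\]
which is arbitrarily small, so $g\in R[a,b]$.

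\textbf{Main obstacle.} The delicate step is the sufficiency direction. It needs two facts: that $D_\varepsilon$ is compact, so that a \emph{finite} cover of small total length exists, and that the Lebesgue-number argument yields uniformly small oscillation on the complementary intervals. The key point is to work with the closed set $D_\varepsilon$ rather than with $D(g)$ itself, since $D(g)$ need not be closed.
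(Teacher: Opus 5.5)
The paper does not prove Theorem~\ref{criteria}. It quotes the result from Bogachev's book and uses it as a black box throughout Sections~2 and~3, so there is no argument of the paper's to compare yours with.

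Your proof is the standard oscillation argument, and it is correct. In the necessity direction, the bound $M_i-m_i\ge\varepsilon$ on cells whose interior meets $D_\varepsilon$ holds because the supremum defining $\omega_g(x)$ is nonincreasing as $\delta\downarrow 0$. In the sufficiency direction, closedness of $D_\varepsilon$ is exactly what gives a finite cover.

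The one phrase worth tightening is ``combine these subintervals with the covering intervals \dots\ into a single partition''. The covering intervals may overlap, so they cannot themselves serve as cells. Instead, take as the remaining cells of $P$ the closures of the gaps between consecutive $J_k$, together with the end gaps at $a$ and $b$ if present. These gaps are pairwise disjoint and lie inside the union of the covering intervals, so their total length is still below $\varepsilon$. That is all the estimate $U(g,P)-L(g,P)\le 2M\varepsilon+\varepsilon(b-a)$ requires.

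For contrast, a common measure-theoretic proof runs as follows.
\begin{itemize}
\item Take nested partitions with mesh tending to $0$. The lower and upper step functions then increase and decrease to limits $\varphi\le g\le\psi$.
\item Off the countable set of partition points, one has $\psi(x)-\varphi(x)=\omega_g(x)$.
\item Monotone or dominated convergence turns ``$U-L\to 0$'' into $\int(\psi-\varphi)\,d\lambda=0$, i.e.\ $\omega_g=0$ almost everywhere.
\end{itemize}
That route presupposes the Lebesgue integral, but it also yields that the Riemann and Lebesgue integrals coincide. Your route needs only outer measure, compactness and Darboux's criterion. Since the paper uses only the criterion itself, never the equality of the two integrals, your elementary version fits its needs.
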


\begin{theorem}[Sufficient condition for integrability]\hfill\label{t22}

If $f\in R[a,b]$, $G$ is bounded, and $\lambda(f^{-1} D(G)) = 0$, then $h \in R[a,b]$.
\end{theorem}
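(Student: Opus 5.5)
We apply Theorem \ref{criteria} to $h=G\circ f$. Since $G$ is bounded on $[c,d]$, $h$ is bounded on $[a,b]$. It therefore suffices to show that $\lambda(D(h))=0$. Because $f\in R[a,b]$, Theorem \ref{criteria} (applied to $f$, which is bounded with values in $[c,d]$) already gives $\lambda(D(f))=0$.

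The key step is the set inclusion
\[
D(h)\subset D(f)\cup f^{-1}D(G).
\]
To prove it, we take $x\in[a,b]$ with $x\notin D(f)$ and $f(x)\notin D(G)$. Then $f$ is continuous at $x$ and $G$ is continuous at $f(x)$ as a function on $[c,d]$. The standard composition argument shows that $h$ is continuous at $x$. Given $\varepsilon>0$, choose $\eta>0$ with $|G(y)-G(f(x))|<\varepsilon$ for $y\in[c,d]$, $|y-f(x)|<\eta$. Then choose $\delta>0$ with $|f(t)-f(x)|<\eta$ for $t\in[a,b]$, $|t-x|<\delta$. This works because $f$ takes values in $[c,d]=\Dom(G)$, so no issue with the domain arises. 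Hence $x\notin D(h)$, which is the contrapositive of the inclusion.

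It remains to conclude from the inclusion. Lebesgue measure is complete, and $D(h)$ lies inside the union of $D(f)$, a null set, and $f^{-1}D(G)$, which has measure zero by hypothesis. So $D(h)$ is measurable with $\lambda(D(h))=0$; if one prefers to avoid measurability questions, one can simply use $\lambda^*(D(h))\le \lambda^*(D(f))+\lambda^*(f^{-1}D(G))=0$. Theorem \ref{criteria} then yields $h\in R[a,b]$.

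I do not expect a genuine obstacle here. The only point needing care is the interpretation of the hypothesis $\lambda(f^{-1}D(G))=0$: it should be read as saying that the preimage is a null set, or at least has outer measure zero. This is harmless because of completeness.
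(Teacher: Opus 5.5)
Your proof is correct and follows the paper's argument: the inclusion $D(h)\subset D(f)\cup f^{-1}D(G)$, subadditivity of outer measure, then Lebesgue's criterion together with the boundedness of $h$. The only difference is that you write out the $\varepsilon$--$\delta$ argument for continuity of the composition, which the paper just asserts.
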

\begin{proof}
 If $f$ is continuous at the point $x$ and $G$ is continuous at the point $f(x)$, then $h$ is continuous at the  point $x$. Hence
$$D(h) \subset D(f) \cup f^{-1} D(G),~
\lambda^{*}(D(h)) \leq \lambda^{*}(D(f)) + \lambda^{*}(f^{-1} D(G)).$$
Since $f \in R[a,b]$, we have $\lambda^{*}(D(f)) =\lambda(D(f))= 0$. Next,
$\lambda^{*}(f^{-1} D(G)) = 0 $ according to the condition of the theorem. Therefore,
$0=\lambda^{*}(D(h)) = \lambda(D(h)).$ This together with the boundedness of $h$ implies $h \in R[a,b]$.
\end{proof}

\begin{example}[Conditions of Theorem \ref{t22} are not necessary]\hfill
\end{example}
Let $f(x) = 0$ if  $x \in [0, \frac{1}{2}]$,  $f(x)= x - \frac{1}{2}$ if $x \in (\frac{1}{2}, 1]$, and
$G= \mathds{1}_{\{0\}}$. Then the composition
$h = \mathds{1}_{[0,\frac{1}{2}]}.$ All three functions are Riemann integrable, but $D(G)=\{0\},$
$f^{-1} D(G) = [0, \frac{1}{2}]$, and   $\lambda(f^{-1} D(G)) > 0$.

\begin{theorem}[Criterion for a fixed external function]\hfill

Fix  $G \in R[c,d]$.  For each Riemann integrable  $f:[a,b]\to [c,d]$,   $G \circ f \in R[a,b]$  if and only if  $G \in C[c,d]$.
\end{theorem}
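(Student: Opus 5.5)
The plan has two directions. The sufficiency direction is immediate from the introduction. If $G\in C[c,d]$, then $D(G)=\emptyset$, so $f^{-1}D(G)=\emptyset$ has measure zero. Also $G$ is bounded, so Theorem \ref{t22} gives $G\circ f\in R[a,b]$ for every Riemann integrable $f:[a,b]\to[c,d]$.

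For necessity we argue by contraposition. Suppose $G\in R[c,d]$ is discontinuous at some point $y_0\in[c,d]$. We will construct a Riemann integrable $f:[a,b]\to[c,d]$ with $G\circ f\notin R[a,b]$, modelled on the popcorn counterexample of the introduction. Since $G$ is discontinuous at $y_0$, there is $\varepsilon>0$ and a sequence $y_n\to y_0$, $y_n\in[c,d]$, $y_n\ne y_0$, with $|G(y_n)-G(y_0)|\ge\varepsilon$ for all $n$. Enumerate the rationals of $[a,b]$ as $r_1,r_2,\dots$. Define
\[
f(r_n)=y_n \quad (n\ge 1), \qquad f(x)=y_0 \quad \text{for irrational } x\in[a,b].
\]
Clearly $f$ maps $[a,b]$ into $[c,d]$ and is bounded.

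The first check is that $f$ is continuous at every irrational $x$. Given $\delta>0$, only finitely many $n$ satisfy $|y_n-y_0|\ge\delta$. A neighbourhood of $x$ avoiding those finitely many $r_n$ gives $|f(t)-f(x)|<\delta$ on it. Hence $D(f)\subset\mathbb{Q}\cap[a,b]$, so $\lambda(D(f))=0$ and $f\in R[a,b]$ by Theorem \ref{criteria}.

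The second check is that $h=G\circ f$ equals $G(y_0)$ at irrationals and $G(y_n)$ at $r_n$, with $|G(y_n)-G(y_0)|\ge\varepsilon$. Both the rationals and the irrationals are dense, so every point of $[a,b]$ is a discontinuity point of $h$. Then $\lambda(D(h))=b-a>0$. Since $h$ is bounded ($G$ is bounded, being Riemann integrable), Theorem \ref{criteria} gives $h\notin R[a,b]$.

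The only delicate step is choosing $f$ so that its discontinuities sit exactly where they are harmless (a countable set) while $h$ inherits the jump of $G$ everywhere. The requirement $y_n\to y_0$ is what makes $f$ continuous at irrationals, and this is the crux. The construction also handles the case where $y_0$ is an endpoint of $[c,d]$, since then the $y_n$ simply approach it from one side.
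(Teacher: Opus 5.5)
Your proof is correct, but for necessity it takes a genuinely different route from the paper. You transfer the discontinuity of $G$ at $y_0$ through a popcorn-type internal function: $f\equiv y_0$ on the irrationals and $f(r_n)=y_n\to y_0$ on an enumeration of the rationals. Then $D(f)$ is countable, while $h$ behaves like a Dirichlet function with $D(h)=[a,b]$.

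The paper instead reduces to $[0,1]$ and builds a \emph{continuous} $f$. It sets $f\equiv s$ on the fat Cantor set $K$, and on each gap $(a_k,b_k)$ uses a tent with peak $s_{j(k)}$ at the midpoint, where $j(k)$ grows as the gap shrinks. Continuity at points of $K$ then needs a careful $\delta$-argument. The paper shows $h$ is discontinuous at every non-endpoint of $K$, so $\lambda(D(h))\ge 1/2$.

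Your argument is shorter and more elementary. It uses no Cantor sets and no reduction to $[0,1]$. It needs no subsequence with $G(s_n)\to z$, since your uniform $\varepsilon$-gap suffices. The continuity check is a one-line finiteness argument.

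What the paper's construction buys is a stronger result. Since its $f$ is continuous, the criterion stays true with ``Riemann integrable $f$'' replaced by ``continuous $f$''; your construction does not give this. That strengthening is also why a nowhere dense set of positive measure is unavoidable there. For continuous $f$ one has $D(h)\subset f^{-1}D(G)$. Moreover, $D(h)$ cannot contain a nondegenerate interval $I$: otherwise $f(I)$ would be an interval inside the null set $D(G)$, so $f$ and hence $h$ would be constant on $I$. So $\lambda(D(h))>0$ forces $h$ to be discontinuous on a set of positive measure with empty interior.
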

\begin{proof}
\textit{Sufficiency} was mentioned in the Introduction.

\textit{Necessity.}
Prove by the contrary.
Assume that $G$  is discontinuous. We have to show that there exists $f \in R[a,b]$ with $h \notin R[a,b]$.

Without loss of generality  we may and do assume that $[a,b]=[c,d]=[0,1]$.
\medskip

\textit{1. Construction of $f$.}

$G \notin C[0,1]$ implies that $\exists s \in D(G)$ $  \exists \{ s_n: n \geq 1\} \subset [0,1]$ with $s_n \to s$ and $G(s_n) \to z \neq G(s)$ as $ n \to \infty $.

Let $K$ be the "fat" Cantor's set,
$[0,1] \setminus K = \cup_{k=1}^\infty (a_k, b_k) $,
$c_k := (a_k + b_k)/2$,

$ j(k) := l$ if $2^{-(l+1)} \leq b_k - a_k < 2^{-l} $,

$f(x) := \begin{cases}s, & x \in K \\ s + \frac{s_{j(k)}-s}{c_k-a_k}(x-a_k), & x \in (a_k, c_k] \\ s + \frac{s_{j(k)}-s}{c_k-b_k}(x-b_k), & x \in (c_k, b_k). \end{cases}$

Then $f \in C([0,1] \setminus K)$.

\medskip
\textit{2. Proof of the continuity of $f$ in $K$.}

Fix $x_0 \in K$ and $\varepsilon > 0$.
Then $\exists N \geq 1~ \forall n \geq N: |s_n - s| < \varepsilon.$\\
$\lim_{x \to a_k+0} f(x) = f(a_k)$, hence   $$\exists \delta'_k > 0~ \forall x \in [a_k, a_k+\delta'_k]: |f(x) - f(a_k)| < \varepsilon.$$
$\lim_{x \to b_k-0} f(x) = f(b_k)$, hence $$\exists \delta''_k > 0~ \forall x \in [b_k - \delta''_k, b_k]: |f(x) - f(b_k)| < \varepsilon.$$

Define
$\delta_0 = \begin{cases}\delta'_k, & x_0 = a_k \\ \delta''_k, & x_0 = b_k \\ 1, & \textrm{otherwise,} \end{cases}$

$A_N = \{ k \geq 1: j(k) < N \}. $Then
$\forall k \in A_N: b_k - a_k \geq 2^{-N} $.

$\sum_{k \in A_N} (b_k-a_k) \leq 1$, hence  $\#A_N \leq 2^N < \infty $.
$$\delta := \min(2^{-N}, \delta_0, \min \cup_{k \in A_N} (\{ |x_0 - a_k|, |x_0-b_k| \} \setminus \{ 0 \})), ~\delta > 0. $$

Consider arbitrary $x \in [0,1]$ with $ |x-x_0| < \delta$.

If $x \in K$ then $|f(x) - f(x_0)|=|s-s|=0  < \varepsilon$,
if $x \in (a_k, b_k)$ and $j(k) \geq N$, then  $|f(x)-f(x_0)| \leq |s_{j(k)} - s| < \varepsilon $,
and if $x \in (a_k, b_k)$ and $ j(k) < N$, then $x_0 = a_k$ or $x_0 = b_k$, and since $|x-x_0| < \delta_0,$  we get  $|f(x) - f(x_0)| < \varepsilon$.

Therefore, $f \in C[0,1]$ and $  f \in R[0,1]$.

\medskip
\textit{3. Proof that the composition is not Riemann integrable.}

For $E:=K \setminus \cup_{k \geq 1} \{ a_k, b_k \},$ it holds 
$\lambda(E) = \lambda({K}) = \frac{1}{2} $.
Fix $x_0 \in E$ and $N \geq 1$,

$B_N := \cup_{k \in A_{N+1}} [a_k, b_k]$.
$K$ is nowhere dense, therefore, $$ (x_0 - 2^{-N-1}, x_0 + 2^{-N-1}) \setminus B_N \setminus K \neq \varnothing.$$
\begin{multline*}
(x_0 - 2^{-N-1}, x_0 + 2^{-N-1}) \setminus B_N \setminus K \\
= \cup_{k \geq 1} (((x_0 - 2^{-N-1}, x_0 + 2^{-N-1}) \setminus B_N) \cap (a_k, b_k)).
\end{multline*}
$$\exists k = k(N): ((x_0 - 2^{-N-1}, x_0 + 2^{-N-1}) \setminus B_N) \cap (a_k, b_k) \neq \varnothing,$$ hence
$ j(k) \geq N + 1$, and   $|b_k - a_k| \leq 2^{-N-1}$.

Select $d_k \in ((x_0 - 2^{-N-1}, x_0 + 2^{-N-1}) \setminus B_N) \cap (a_k, b_k)$. We have
$$|a_k - x_0| \leq |a_k - d_k| + |d_k - x_0| < b_k - a_k + 2^{-N-1} \leq 2 \cdot 2^{-N-1} = 2^{-N}.$$
Similarly $|b_k - x_0| < 2^{-N}$, hence
$(a_k, b_k) \subset (x_0 - 2^{-N}, x_0 + 2^{-N})$, and\\
$c_{k(N)} \in (x_0 - 2^{-N}, x_0 + 2^{-N})$.
Now,
\begin{align*}
|h(c_{k(N)}) - h(x_0)| &= |G(s_{j(k(N))}) - G(s)| \geq |z - G(s)| - |z - G(s_{j(k(N))})| \\
&\to |z - G(s)| > 0 \quad \text{as } N \to \infty.
\end{align*}
So, we get $ x_0 \in D(h)$. Thus, $  E \subset D(h)$, $  \lambda(D(h)) \geq \lambda(E) = 1/2 > 0$, and  $h$ is not Riemann integrable.
\end{proof}

\begin{definition}
Let $A\subset\mathbb{R}, f:A\to\mathbb{R}$. The function $f$ has $N^{-1}$-property if for each $N$ with $\lambda(N)=0$, it holds $\lambda(f^{-1}N)=0.$
\end{definition}

Properties of such functions are studied by Kowalczyk and Turowska (2016) \cite{Kowalczyk}. If a function $g$ is Lebesgue integrable in $\mathbb{R}$ and has $N^{-1}$-property, then it is $(S,S)$-measurable, i.e., for each $A\in S,~ g^{-1}A\in S.$

\begin{theorem}[Sufficient condition for fixed internal function]\label{t24}\hfill

Let $G$ be Riemann integrable. If $f$ has $N^{-1}$-property,
then $h \in R[a,b]$.
\end{theorem}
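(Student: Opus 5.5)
The plan is to reduce the statement directly to Theorem~\ref{t22}. Throughout I read the statement in the setting of this section, where the internal function $f:[a,b]\to[c,d]$ is itself assumed Riemann integrable, $f\in R[a,b]$. The title ``fixed internal function'' suggests exactly this: $f\in R[a,b]$ is fixed, and the claim is that every $G\in R[c,d]$ then gives $G\circ f\in R[a,b]$.

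This assumption cannot be dropped. Take $f(x)=x$ for irrational $x$ and $f(x)=a+b-x$ for rational $x$. This $f$ has the $N^{-1}$-property, since the preimage of a null set is contained in that null set together with a countable set. Yet with $G$ the identity, $h=f$ is discontinuous everywhere except possibly at $(a+b)/2$, so $h\notin R[a,b]$.

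With $f\in R[a,b]$ assumed, the steps are as follows.

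\emph{First}, $G\in R[c,d]$ is bounded by definition of Riemann integrability. Theorem~\ref{criteria} then gives $\lambda(D(G))=0$.

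\emph{Second}, apply the $N^{-1}$-property of $f$ to the null set $N:=D(G)$. This gives $\lambda(f^{-1}D(G))=0$.

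\emph{Third}, all three hypotheses of Theorem~\ref{t22} now hold: $f\in R[a,b]$, $G$ is bounded, and $\lambda(f^{-1}D(G))=0$. Hence $h=G\circ f\in R[a,b]$.

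The argument is a short chain of earlier results, so there is no substantial obstacle. The one delicate point is making explicit that $f\in R[a,b]$ (equivalently, $\lambda(D(f))=0$ together with boundedness, which is automatic since $f$ takes values in $[c,d]$). It is needed for the inclusion $D(h)\subset D(f)\cup f^{-1}D(G)$ in the proof of Theorem~\ref{t22} to yield a null set. I would state this assumption explicitly at the start of the proof.
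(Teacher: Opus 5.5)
Your proposal is correct and follows the paper's proof exactly: Lebesgue's criterion gives $\lambda(D(G))=0$, the $N^{-1}$-property gives $\lambda(f^{-1}D(G))=0$, and Theorem~\ref{t22} then yields $h\in R[a,b]$. The paper leaves the hypothesis $f\in R[a,b]$ implicit; it is needed to invoke Theorem~\ref{t22} and is stated explicitly when this result is reused in Theorem~\ref{t25}. Your example ($f(x)=x$ on irrationals, $f(x)=a+b-x$ on rationals, $G$ the identity) correctly shows that this hypothesis cannot be dropped.
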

\begin{proof}
$G \in R[c,d]$, hence $ \lambda(D(G)) = 0$.
Since $f$ has $N^{-1}$-property, $\lambda({f^{-1} D(G)}) = 0$, and by \textrm{Theorem} \ref{t22}  $h \in R[a,b]$.
\end{proof}

\begin{remark}
The conditions of Theorem \ref{t24} are not necessary. E.g., the function $f(x)=const$ has no $N^{-1}$-property, but for each Riemann integrable $G$, it holds $G\circ f\in R[a,b].$
\end{remark}

\begin{theorem}[Criterion for a fixed internal function]\label{t25}
\hfill

Fix $f \in R[a,b]$.
If  additionally there is no interval where $f$ takes a constant value, then
for each $G \in R[c,d], G \circ f \in R[a,b]$ if and only if $f$ has $N^{-1}$-property.

\end{theorem}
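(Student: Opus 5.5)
The sufficiency part ("if $f$ has $N^{-1}$-property then $G\circ f\in R[a,b]$ for every $G\in R[c,d]$") is exactly Theorem \ref{t24}, so only necessity needs work. I will argue by contraposition: assume $f\in R[a,b]$ is constant on no interval and does not have $N^{-1}$-property, and construct $G\in R[c,d]$ with $G\circ f\notin R[a,b]$. The external functions will all be indicators $G=\mathds{1}_A$ with $A\subset F$, where $F\subset[c,d]$ is a \emph{closed} null set. For such $G$ we have $D(G)\subset\overline{A}\subset F$, so $\lambda(D(G))=0$, and Theorem \ref{criteria} gives $G\in R[c,d]$. The problem thus reduces to finding a closed null $F$ and a subset $A\subset F$ with $\lambda(D(\mathds{1}_{f^{-1}A}))>0$.

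\emph{Step 1: reduction to a compact null set.} Since $\lambda(D(f))=0$, $f$ is continuous a.e. and hence Lebesgue measurable. Failure of $N^{-1}$-property gives a null set $N$ with $\lambda^*(f^{-1}N)>0$. Enlarging $N$ to a Borel (e.g.\ $G_\delta$) null set $N'\supset N$, the set $f^{-1}N'$ is measurable and $\lambda(f^{-1}N')>0$. Consider the finite Borel measure $\mu:=\lambda\circ f^{-1}$ on $[c,d]$. It is inner regular, being a finite Borel measure on a compact metric space, so there is a compact $F\subset N'$ with $\mu(F)=\lambda(f^{-1}F)>0$, and $\lambda(F)=0$. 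I expect this to be the main conceptual obstacle. A naive choice such as $G=\mathds{1}_N$ need not be Riemann integrable, because the closure of $N$ may have positive measure.

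\emph{Step 2: choosing $A$.} Taking $A=F$ does not suffice. The popcorn function maps $[0,1]$ onto a countable compact set $F$, and then $\mathds{1}_F\circ f\equiv1$. So $A$ must be chosen so that $f^{-1}A$ contains no interval. Enumerate the intervals $I_n$ with rational endpoints in $[a,b]$. Since $f$ is constant on no interval, each $f(I_n)$ is a set with at least one point, and I pick some $z_n\in f(I_n)$. Put $Z=\{z_n:n\ge1\}$, which is countable, and $A:=F\setminus Z$. Then no interval $I$ satisfies $f(I)\subset A$, since $I\supset I_n$ for some $n$ and $z_n\notin A$. Hence $h=\mathds{1}_{f^{-1}A}$ equals $1$ at each $x\in f^{-1}A$ but takes the value $0$ in every neighbourhood of $x$, so $f^{-1}A\subset D(h)$.

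\emph{Step 3: measure count.} We have $\lambda(f^{-1}F)=\lambda(f^{-1}A)+\lambda(f^{-1}(F\cap Z))$.

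If $\lambda(f^{-1}A)>0$, then $\lambda(D(h))>0$ and $h\notin R[a,b]$ by Theorem \ref{criteria}.

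Otherwise $\lambda(f^{-1}(F\cap Z))>0$. Because $Z$ is countable, some $z\in F$ has $\lambda(f^{-1}\{z\})>0$. Take $G=\mathds{1}_{\{z\}}\in R[c,d]$. Then $h=\mathds{1}_{f^{-1}\{z\}}$, and since $f$ is nonconstant on every interval, each neighbourhood of a point of $f^{-1}\{z\}$ contains points where $h=0$. Thus $f^{-1}\{z\}\subset D(h)$, so $\lambda(D(h))>0$ and again $h\notin R[a,b]$.

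This completes necessity. The hypothesis that $f$ is constant on no interval is used exactly in Steps 2 and 3, to guarantee that points of the preimage are discontinuity points of the indicator.
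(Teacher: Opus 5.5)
Your proof is correct, and it takes a shorter route than the paper's. Both arguments share the setup: a null compact $F$ with $\mu(F)>0$, and indicators of subsets of $F$, which are automatically in $R[c,d]$.

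\begin{itemize}
\item To obtain $F$, the paper invokes the Lebesgue decomposition of $\mu=\lambda f^{-1}$. As you implicitly observe, this is unnecessary: inner regularity applied directly inside the $G_\delta$ null set suffices.
\item The paper then splits into cases according to $B=f^{-1}F$. If $\lambda(B\setminus B^{\circ})>0$, it takes $M=F$. Otherwise it fixes a component $J$ of $B^{\circ}$ and asks whether $\mu_J=(\lambda|_J)f^{-1}$ has an atom. The atomic case is handled by $\mathds{1}_{\{y_0\}}$, exactly as in your second alternative.
\item In the non-atomic case the paper deduces that $f(Q)$ is uncountable for every subinterval $Q\subset J$. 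It then recursively builds a countable $M\subset F$ with $f^{-1}M$ and $f^{-1}(F\setminus M)$ both dense in $J$, so that $J\subset\partial(f^{-1}M)$.
\end{itemize}

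Your device of removing from $F$ a countable set $Z$ that meets every $f(I_n)$ replaces the whole case analysis at once. It forces $f^{-1}(F\setminus Z)$ to have empty interior globally. So either this set has positive measure and lies in $D(h)$, or the mass of $f^{-1}F$ sits on countably many level sets and one of them has positive measure.

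The paper's argument gives slightly more structure: in the diffuse case a whole interval $J$ lies in $D(h)$. Yours gives brevity, needs no uncountability of $f(Q_n)$, and uses the non-constancy hypothesis only once.

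On that last point, there is a small misstatement. In Step 2 the hypothesis is not needed: $f(I_n)\neq\varnothing$ trivially, and any choice of $z_n$ works. The hypothesis enters only in the atom case of Step 3, where it guarantees that $f^{-1}\{z\}$ has empty interior. Your closing sentence should be corrected accordingly.
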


\begin{proof}
\textit{Sufficiency} was proven in Theorem \ref{t24}.

\textit{Necessity}. Prove by contradiction: assuming that $f$ does not possess  $N^{-1}$-property, we construct $G \in R[c,d]$ with $h = G \circ f \notin R[a,b]$.

Thus, suppose that there exists a set $N \subset [c,d]$ with $\lambda(N) = 0$ and $\lambda^{*}(f^{-1} N) > 0$. We first replace $N$ by a Borel set with the same two properties. For each $k \geq 1$, by the outer regularity of the Lebesgue measure (\cite{Bogachev}, chapter 1.4), choose an open set $U_{k} \supset N$ with $\lambda(U_{k}) < 1/k$, and put $H= \bigcap_{k \geq 1} U_{k}$. Then $H$ is  $G_\delta$ set (hence Borel), $N \subset H$, and $\lambda(H) \leq \inf_{k} \lambda(U_{k}) = 0$. Since $N \subset H$ implies $f^{-1} N \subset f^{-1} H$, we have $\lambda^{*}(f^{-1} H) \geq \lambda^{*}(f^{-1} N) > 0$. Changing $N$ by $H$, we may and do assume that  $N$ is Borel with $\lambda(N) = 0$.

\medskip
\textit{The support.} Let $\mu := \lambda f^{-1}$ be the induced measure on $[c,d]$, $\mu(E) = \lambda(f^{-1} E)$ for a Borel set $E$; it is a finite Borel measure of the total mass $b-a$. Since $\mu(N) = \lambda(f^{-1} N) > 0$ while $\lambda(N) = 0$, $\mu$ is not absolutely continuous with respect to $\lambda$. By the Lebesgue decomposition, $\mu = \mu_{ac} + \mu_{s}$ with $\mu_{s} \perp \lambda$ and $\mu_{s} \neq 0$; let $Z$ be a Borel set with $\lambda(Z) = 0$ on which $\mu_{s}$ is concentrated. By the inner regularity of $\mu$ (\cite{Bogachev}, chapter 1.4), there is a compact set $F \subset Z$ with $\mu(F) > 0$. Put $B := f^{-1} F$. Then $F$ is closed, $\lambda(F) = 0$, and $\lambda(B) = \mu(F) =: \gamma > 0$.

If $M \subset F$ is any Borel set, then $G := \mathds{1}_{M} \in R[c,d]$, because $D(\mathds{1}_{M}) = \partial M \subset \overline{M} \subset F$ (since $F$ is closed) and $\lambda(F) = 0$. Moreover $h = \mathds{1}_{M} \circ f = \mathds{1}_{f^{-1} M}$, so $D(h) = \partial (f^{-1} M)$. Hence, by Theorem \ref{criteria}, it suffices to find a Borel $M \subset F$ with $\lambda(\partial (f^{-1} M)) > 0$.

 Now, we produce $M$, distinguishing cases according to $B$.

\textit{Case 1: $\lambda(B \setminus B^{\circ}) > 0$.} Take $M = F$, so $f^{-1} M = B$. $\lambda(\partial B) \geq \lambda(B \setminus B^{\circ}) > 0$, hence $h = \mathds{1}_{B} \notin R[a,b]$.

\textit{Case 2: $\lambda(B \setminus B^{\circ}) = 0$.} Write the open set $B^{\circ} = \cup_{i} J_{i}$ as a disjoint union of open intervals and fix one component $J := J_{i}$; then $f(J) \subset F$. Consider the finite measure $\mu_{J} := (\lambda|_{J}) f^{-1}$ on $F$, of the total mass $\lambda(J) > 0$.

\textit{Case 2(a): $\mu_{J}$ has an atom $y_{0}$.} We have $\lambda(f^{-1}(y_{0}) \cap J) = \mu_{J}(\{ y_{0} \}) > 0$. A level set is as follows $P := f^{-1} \{ y_{0} \}$. If $P^{\circ} \neq \varnothing$ then $\exists (\alpha, \beta) \subset P^{\circ} \subset [a,b]: f|_{(\alpha, \beta)} = y_0$, contrary to the condition of the Theorem. We have $\lambda(\partial P) = \lambda(\overline{P}) \geq \lambda(P) > 0$. Take $M = \{ y_{0} \} \subset F$; then $G = \mathds{1}_{\{ y_{0} \}} \in R[c,d]$ and $h = \mathds{1}_{P} \notin R[a,b]$.

\textit{Case 2(b): $\mu_{J}$ is non-atomic.} Now, every level set is null in $J$, that is $\lambda(J \cap f^{-1}(y)) = \mu_{J}(\{ y \}) = 0$ for each $y$. Hence $f$ takes uncountable number of values in every sub-interval $Q \subset J$; for if $f(Q)$ were countable, then $Q = \bigcup_{y \in f(Q)} (Q \cap f^{-1}(y))$ would be a countable union of $\lambda$-null sets, contradicting the relation $\lambda(Q) > 0$.

Let $Q_{1}, Q_{2}, \dots$ enumerate the sub-intervals of $J$ with rational endpoints. Recursively pick  distinct points $a_{n}, b_{n} \in f(Q_{n})$ avoiding the finite set $\{ a_{i}, b_{i} : i < n \}$; this is possible since $f(Q_{n})$ is uncountable. The set $M := \{ a_{n} : n \geq 1 \}$ is countable (hence, Borel) subset of $F$ with $a_{n} \in M$ and $b_{n} \notin M$ for every $n$. For each $n,$ there exist $x, x' \in Q_{n}$ with $f(x) = a_{n} \in M$ and $f(x') = b_{n} \in F \setminus M$, so $f^{-1} M$ and $f^{-1}(F \setminus M)$ both meet $Q_{n}$. As  rational sub-intervals form the base of $J$, both sets are dense in $J$; hence, every point of $J$ is a boundary point of $f^{-1} M$, i.e., $\partial(f^{-1} M) \supset J$. Therefore, $\lambda(\partial(f^{-1} M)) \geq \lambda(J) > 0$ and $h = \mathds{1}_{f^{-1} M} \notin R[a,b]$.

In every case $G \in R[c,d]$, while $h = G \circ f \notin R[a,b]$. This proves the statement by the contrary, and the necessity follows.
\end{proof}

\begin{remark}\label{r22}
The assumption that at each interval $f$ is not a constant can be relaxed.

Fix $f \in R[a,b]$. Let $U = \cup \{ (\alpha, \beta) \subset [a,b]: f|_{(\alpha,\beta)} = const \}$

Then for each $G \in R[c,d]: G \circ f \in R[a,b]$ if and only if $f|_{[a,b] \setminus U}$ has  $N^{-1}$-property.
\end{remark}

Theorem \ref{t25} is a special case for $U = \varnothing$. The statement in Remark \ref{r22} can be proven following the line of the proof of Theorem \ref{t25}, with slight modification.

\section{Conditions with continuous functions for Riemann integrability}

The next example is close to the one from \cite{Gelbaum}, Chap. 8, Section 34.
\begin{example}[If $f$ is monotonic and continuous, then $h$ need not be Riemann integrable]\hfill
\end{example}
Let $G(x)=\mathds{1}_{C}(x), x\in [0,1]$, where $C$ is Cantor's set \cite{Gelbaum}, and
$f:[0,1] \to [0,1], f(x) = \frac{1}{2}(x + \theta(x))$, where $\theta$ is the Cantor staircase function \cite{Gelbaum}.

$f$ is continuous and increasing.
$D(G) = C,~ \lambda(D(G)) = 0$, hence   $G \in R[0,1]$.

$h=G\circ f = \mathds{1}_{f(C)}$. Here, $f(C)=K$ is the "fat" Cantor   set of the Lebesgue measure $1/2$; see \cite{Gelbaum}, Section 16.

$D(h) = K$, because $K$ is nowhere dense. Then $\lambda(D(h)) = \lambda(K)=\frac{1}{2} \neq 0$, and therefore,  $h \not\in R[0,1]$.

\begin{lemma}\label{l31}\hfill

Let $A \subset \mathbb{R}, $
$f:A \to \mathbb{R}$ satisfy the inverse Lipschitz condition, i.e., $$\exists L > 0~ \forall x,y \in A: |f(x) - f(y)| \geq L|x-y|. $$
Then $\forall B \subset A:~ \lambda^{*}(B) \leq \frac{1}{L}\lambda^{*}(f(B)).$
\end{lemma}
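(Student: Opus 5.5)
The key observation is that $f$ is injective on $A$. Its inverse $g := f^{-1}: f(A)\to A$ is Lipschitz with constant $1/L$, since $|g(u)-g(v)| \le \frac{1}{L}|u-v|$ for $u,v \in f(A)$. The statement therefore reduces to the standard fact that a Lipschitz map with constant $M$ multiplies outer measure by at most $M$. Applied to $g$ on the set $f(B)$, this fact gives $\lambda^{*}(g(f(B))) = \lambda^{*}(B) \le \frac{1}{L}\lambda^{*}(f(B))$. We cannot use any measurability of $f(B)$ or $B$, so the argument must be done directly with covers.

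Fix $B\subset A$ and $\varepsilon>0$. If $\lambda^{*}(f(B))=\infty$ there is nothing to prove, so assume it is finite. By the definition of outer measure, choose a countable family of intervals $I_k$ with $f(B)\subset \bigcup_k I_k$ and $\sum_k \lambda(I_k) \le \lambda^{*}(f(B))+\varepsilon$. Put $B_k := B\cap f^{-1}(I_k)$, so that $B=\bigcup_k B_k$. For $x,y\in B_k$ we have $f(x),f(y)\in I_k$, hence $|x-y| \le \frac{1}{L}|f(x)-f(y)| \le \frac{1}{L}\lambda(I_k)$. Thus $\operatorname{diam}(B_k)\le \lambda(I_k)/L$.

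A subset of $\mathbb{R}$ of diameter $d$ lies in a closed interval of length $d$, namely $[\inf B_k,\sup B_k]$. Hence $\lambda^{*}(B_k)\le \lambda(I_k)/L$. By countable subadditivity,
$$\lambda^{*}(B)\le \sum_k \lambda^{*}(B_k) \le \frac{1}{L}\sum_k \lambda(I_k) \le \frac{1}{L}\bigl(\lambda^{*}(f(B))+\varepsilon\bigr).$$
Letting $\varepsilon\to 0$ gives the claim.

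I do not expect a real obstacle here. The only point needing care is the step where the diameter of $B_k$ controls its outer measure. This step is one-dimensional and uses no measurability of $B_k$ or $f(B)$, which is exactly why I work with outer measure throughout. Empty $B_k$ are harmless.
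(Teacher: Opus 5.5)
Your proof is correct and is essentially the paper's argument. Both cover $f(B)$ by intervals, bound the diameter of each preimage piece by $L^{-1}$ times the interval length, place that piece inside a closed interval of that length, sum, and pass to the infimum; your $\varepsilon$-argument is equivalent to the paper's infimum over covers, and the paper's use of half-open intervals $(a_n,b_n]$ and full preimages $f^{-1}(a_n,b_n]$ instead of $B\cap f^{-1}(I_k)$ is only cosmetic.
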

\begin{proof}
Let $B \subset A$,
 $f(B) \subset \cup_{n\geq 1} (a_n, b_n]$.
Then $B \subset f^{-1} f(B) \subset \cup_{n\geq 1} f^{-1}(a_n, b_n]. $
$$\forall n \geq 1~ \forall x,y \in f^{-1}(a_n, b_n]: |f(x)-f(y)| \geq L|x-y|,$$
$$|x-y|\leq L^{-1}|f(x)-f(y)|\leq L^{-1}(b_n-a_n).$$

If $f^{-1}(a_n, b_n] = \varnothing$ then  $\lambda(f^{-1}(a_n, b_n])=0 \leq L^{-1}(b_n-a_n)$.

If $f^{-1}(a_n, b_n] \neq \varnothing,$ then
$$\exists c \in f^{-1}(a_n, b_n]~
\forall x \in f^{-1}(a_n, b_n]: |x| \leq |c| + |x-c|\leq |c| + L^{-1}(b_n-a_n). $$
Introduce the bounds $m = \inf f^{-1}(a_n, b_n] $ and $M = \sup f^{-1}(a_n, b_n] $.
We have $f^{-1}(a_n, b_n] \subset [m,M]$, and
$$\lambda^{*}(f^{-1}(a_n, b_n]) \leq \lambda^{*}([m,M]) = M-m = \sup_{x,y\in f^{-1}(a_n, b_n]}|x-y| \leq L^{-1}(b_n-a_n), $$
$$\lambda^{*}(B) \leq \sum_{n \geq 1} \lambda^{*}(f^{-1}(a_n, b_n]) \leq L^{-1} \sum_{n \geq 1} (b_n-a_n),$$
$$\lambda^{*}(B) \leq L^{-1} \inf\{\sum_{n \geq 1} (b_n-a_n) : f(B) \subset \cup_{n\geq 1} (a_n, b_n]\} = L^{-1} \lambda^{*}(f(B)).$$
\end{proof}

\begin{theorem}[Composition of absolutely continuous function and Riemann integrable function] \label{t31}\hfill

If $f \in AC[a,b], f' \neq 0 \pmod \lambda$, and $G \in R[c,d]$, then $h \in R[a,b].$
\end{theorem}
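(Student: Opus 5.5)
The plan is to reduce the statement to Theorem \ref{t24}: it suffices to show that an $f\in AC[a,b]$ with $f'\neq 0 \pmod\lambda$ has the $N^{-1}$-property. Then, since $G\in R[c,d]$, Theorem \ref{t24} (via Theorem \ref{t22}) gives $h\in R[a,b]$.

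To verify the $N^{-1}$-property, fix $N\subset[c,d]$ with $\lambda(N)=0$ and put $E:=f^{-1}N$; we must show $\lambda^{*}(E)=0$. Since $f$ is absolutely continuous, $f'(x)$ exists for almost every $x$, and by hypothesis $f'(x)\neq 0$ for almost every $x$. Hence, up to a null set, $E$ is contained in
\[
E' := \{x\in E : f'(x) \text{ exists and } f'(x)\neq 0\}.
\]
For $n\ge 1$ define
\[
E_n := \{x\in E' : |f(y)-f(x)|\ge \tfrac{1}{n}|y-x| \text{ for all } y\in[a,b],\ |y-x|<\tfrac1n\}.
\]
If $f'(x)\neq 0$, then $|f(y)-f(x)|/|y-x|\to|f'(x)|>0$ as $y\to x$. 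So every $x\in E'$ lies in $E_n$ for some $n$, that is, $E'=\bigcup_n E_n$. I do not need the $E_n$ to be measurable, because only outer measure is used below.

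Next I split each $E_n$ into finitely many pieces $E_{n,i}:=E_n\cap I_{n,i}$, where the $I_{n,i}$ are intervals of length less than $1/n$ covering $[a,b]$. On a single piece, take any $x,y\in E_{n,i}$. Then $|x-y|<1/n$ and $x\in E_n$, so $|f(x)-f(y)|\ge \frac1n|x-y|$. Thus $f|_{E_{n,i}}$ satisfies the inverse Lipschitz condition of Lemma \ref{l31} with $L=1/n$. Since $f(E_{n,i})\subset N$, Lemma \ref{l31} gives
\[
\lambda^{*}(E_{n,i})\le n\,\lambda^{*}(f(E_{n,i}))\le n\,\lambda(N)=0.
\]
By countable subadditivity of $\lambda^{*}$, $\lambda^{*}(E')=0$, and therefore $\lambda^{*}(E)=0$. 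So $f$ has the $N^{-1}$-property.

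The step I expect to need the most care is the passage from the pointwise condition $f'(x)\neq0$ to a uniform inverse Lipschitz estimate on sets. The fix is the choice of $E_n$ together with the subdivision into pieces of diameter less than $1/n$: on such a piece the local lower bound at each point already controls every pair of points, so Lemma \ref{l31} applies. Absolute continuity itself is used only to guarantee that $f'$ exists almost everywhere, which would hold for any function of bounded variation. Finally, $h$ is bounded because $G$ is, which is needed to apply Theorem \ref{t22}.
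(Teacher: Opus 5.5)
Your proof is correct and follows the paper's argument essentially step for step: the paper uses the same sets $B_n$ (your $E_n$), the same subdivision into intervals of length at most $1/n$, and Lemma~\ref{l31} with $L=1/n$. The only difference is that the paper applies this directly to $f^{-1}D(G)$, whereas you first package it as the $N^{-1}$-property and then cite Theorem~\ref{t24}. (When you invoke Theorem~\ref{t22}, also note that $f\in R[a,b]$, which holds because $f$ is continuous.)
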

\begin{proof}
$G \in R[c,d],$ hence $\lambda(D(G)) = 0.$ Next, $D(h) \subset D(f) \cup f^{-1} D(G),$ so
$$\lambda^{*}(D(h)) \leq \lambda^{*}(D(f)) + \lambda^{*}(f^{-1}D(G)).$$

Since $f \in AC[a,b]$, we have $f \in C[a,b]$ and $D(f) = \varnothing,$ therefore
$$\lambda^{*}(D(h)) \leq \lambda^{*}(f^{-1}D(G)).$$

From $f' \neq 0 \pmod \lambda$ it follows that there exists $N \subset [a,b]$ with $\lambda(N) = 0$ such that $f'(x)$ exists and $f'(x) \neq 0$ for all $x \in [a,b] \setminus N.$ Define
$$B_{n} := \Big\{ x \in [a,b] : |f(y)-f(x)| \geq \tfrac{|y-x|}{n} \text{ for all } y \in [a,b] \text{ with } |y-x| \leq \tfrac{1}{n} \Big\}.$$

\textit{1. Proof that $[a,b] \setminus N$ is covered by $B_n, n\ge 1.$}

Let $x \in [a,b] \setminus N,$ then there exists a derivative $f'(x)$ with $|f'(x)| = c > 0.$ Since
$$|f'(x)| = \lim_{\substack{y \to x\\ y \in [a,b]}} \Big|\frac{f(y)-f(x)}{y-x}\Big|,$$
there exists $\delta > 0$ such that
$$\forall y \in [a,b],\ 0 < |y-x| < \delta: \Big|\frac{f(y)-f(x)}{y-x}\Big| \geq c - \frac{c}{2}=\frac{c}{2}.$$
Define $n = [\max(\frac{2}{c}, \frac{1}{\delta})] + 1.$ For this $n$ we have $c > \frac{2}{n}$ and $\delta \geq \frac{1}{n}$, hence $x \in B_n.$

\medskip
\textit{2. Proof that $D(h)$ is a null set.}

Consider a partition $[a,b] = \cup_{k=1}^{m_n}I_{nk},$ where $ I_{nk}$  is a closed interval with  length not exceeding $1/n.$

$B_{nk} := B_n \cap I_{nk}, 1 \leq k \leq m_n, $

$f^{-1}D(G) = (f^{-1}D(G) \cap N) \cup (f^{-1}D(G) \cap ([a,b] \setminus N)),$

$f^{-1}D(G) \cap ([a,b] \setminus N) \subset f^{-1}D(G) \cap \cup_{n \geq 1}\cup_{k = 1}^{m_n}B_{nk},$

$0 \leq \lambda^{*}(f^{-1}D(G) \cap N) \leq \lambda^{*}(N) = \lambda(N) = 0,$

$\lambda^{*}(f^{-1}D(G)) \leq \sum_{n \geq 1}\sum_{k=1}^{m_n} \lambda^{*}(f^{-1}D(G) \cap B_{nk}). $
Fix $n \geq 1$ and $k$ with $1 \leq k \leq m_n.$ For any $x,y \in B_{nk}$ both points lie in $I_{nk},$ whose length does not exceed $1/n,$ so $|y-x| \leq 1/n.$ Since $x \in B_n$ and $y \in [a,b]$ with $|y-x| \leq 1/n,$ the defining property of $B_n$ gives $|f(y)-f(x)| \geq |y-x|/n.$ Hence the restriction of $f$ to $B_{nk}$ satisfies the inverse Lipschitz condition with $L = 1/n,$ and Lemma \ref{l31} implies
$$ \lambda^{*}(f^{-1}D(G) \cap B_{nk}) \leq n\lambda^{*}(f(f^{-1}D(G) \cap B_{nk})) \leq n\lambda^{*}(D(G)) = 0, $$
$$ \lambda^{*}(f^{-1}D(G)) \leq \sum_{n \geq 1} \sum_{k=1}^{m_n} 0 = 0,$$ then
$\lambda(D(h)) = 0$ and $h \in R[a,b].$
\end{proof}

 The next statements follow directly from  Theorem \ref{t31}, since $x^2, \sqrt x \in AC[0,1].$
 \begin{example}
 Let $G\in R[0,1].$ Then $G(x^2), G(\sqrt x)\in R[0,1].$
\end{example}
\section{Lebesgue integrability}

\begin{theorem}\label{t41}
Let $f$ be a Lebesgue measurable function on $\mathbb{R}$. Assume the following: $$\exists L>0 ~ \forall x,y \in \mathbb{R} : |f(x)-f(y)| \geq L|x-y|. $$
Then $f$ is $(S,S)$-measurable.
\end{theorem}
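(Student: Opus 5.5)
The plan is to show that $f^{-1}A\in S$ for every $A\in S$. First, $f$ is Borel–Lebesgue measurable: for $A\in B(\mathbb{R})$ we have $f^{-1}A\in S$ by the assumption that $f$ is Lebesgue measurable. Next, let $A\in S$ be arbitrary and write $A=H\cup Z$, where $H$ is Borel (for instance an $F_\sigma$ set) and $\lambda(Z)=0$. This decomposition follows from the inner regularity of $\lambda$ (\cite{Bogachev}, chapter 1.4). Then
$$f^{-1}A=f^{-1}H\cup f^{-1}Z,$$
and $f^{-1}H\in S$. It therefore suffices to prove that $f^{-1}Z$ is a Lebesgue null set, i.e., that $f$ has the $N^{-1}$-property. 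Since $\lambda$ is complete, any set of outer measure zero belongs to $S$.

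The $N^{-1}$-property follows from Lemma \ref{l31} applied with $A=\mathbb{R}$ and $B=f^{-1}Z$. The inverse Lipschitz condition holds on all of $\mathbb{R}$ by hypothesis, so
$$\lambda^{*}(f^{-1}Z)\le \tfrac1L\,\lambda^{*}\bigl(f(f^{-1}Z)\bigr)\le \tfrac1L\,\lambda^{*}(Z)=0,$$
because $f(f^{-1}Z)\subset Z$. Hence $f^{-1}Z\in S$ with $\lambda(f^{-1}Z)=0$. Combining this with $f^{-1}H\in S$ gives $f^{-1}A\in S$.

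The only step that needs care is the decomposition $A=H\cup Z$ of a Lebesgue measurable set into a Borel set and a null set. Concretely, for each $k$ choose a closed $F_k\subset A$ with $\lambda(A\setminus F_k)<1/k$, which is possible because $A$ is $\sigma$-finite. Put $H=\bigcup_k F_k$ and $Z=A\setminus H$. It should also be noted that the inverse Lipschitz condition makes $f$ injective, although the argument does not actually use this. With this decomposition in hand, the rest is a direct application of Lemma \ref{l31}. So I expect no substantial obstacle beyond noting that measurability of $f$ is used only for Borel sets, while null sets are handled by the lemma.
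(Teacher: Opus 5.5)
Your proposal is correct and matches the paper's proof: both split $A\in S$ into a Borel part and a null part, handle the Borel part by the Lebesgue measurability of $f$, and show that $f^{-1}$ of the null part is null via Lemma~\ref{l31} and $f(f^{-1}Z)\subset Z$. The only difference is that the paper cites Halmos for the Borel-plus-null decomposition, whereas you sketch it yourself with closed sets $F_k\subset A$.
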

\begin{proof}
We have to show that $\forall A \in S: f^{-1} A \in S.$

Let $A \in S$. Then there exist a Borel set $B$ and a Lebesgue measurable set $C$ with $A = B \cup C$ and $\lambda(C) = 0$; see \cite{Halmos}. We get a decomposition
$f^{-1}A = f^{-1}B \cup f^{-1}C.$
Since $f$ is $S$-measurable, $f^{-1}B \in S.$
Lemma \ref{l31} implies that $$\lambda^{*}(f^{-1}C) \leq L^{-1}\lambda^{*}(f(f^{-1}C)) \leq L^{-1}\lambda^{*}(C) = 0.$$
Thus,  $\lambda^{*}(f^{-1}C) = 0,   f^{-1}C \in S$, and
$f^{-1}A \in S.$
\end{proof}



\begin{corollary}[Composition of inverse-Lipschitz function and Lebesgue integrable function] \hfill
\label{c41}

 Let $f$ satisfy the conditions of Theorem \ref{t41} and $G\in L(\mathbb{R}).$    Then $G\circ f \in L(\mathbb{R}).$
\end{corollary}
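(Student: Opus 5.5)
To prove Corollary \ref{c41}, I would establish two things: first that $G\circ f$ is Lebesgue measurable, and then that $\int_{\mathbb{R}}|G\circ f|\,d\lambda<\infty$. Both rest on the inverse Lipschitz condition, via Theorem \ref{t41} and Lemma \ref{l31}.

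\emph{Measurability.} Since $G\in L(\mathbb{R})$, the function $G$ is $(S,B(\mathbb{R}))$-measurable. Hence for every Borel set $E\subset\mathbb{R}$ we have $G^{-1}E\in S$. By Theorem \ref{t41}, $f$ is $(S,S)$-measurable, so $(G\circ f)^{-1}E=f^{-1}(G^{-1}E)\in S$. Thus $h=G\circ f$ is Lebesgue measurable.

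\emph{Integrability.} I would first treat indicators and then pass to general $G$.
\begin{itemize}
\item[(i)] Let $A\in S$ with $\lambda(A)<\infty$. Then $\mathds{1}_A\circ f=\mathds{1}_{f^{-1}A}$. Applying Lemma \ref{l31} with $B=f^{-1}A$ gives
$$\lambda(f^{-1}A)\le L^{-1}\lambda^{*}(f(f^{-1}A))\le L^{-1}\lambda(A).$$
\item[(ii)] By linearity, every nonnegative simple function $s$ vanishing outside a set of finite measure satisfies $\int s\circ f\,d\lambda\le L^{-1}\int s\,d\lambda$.
\item[(iii)] For $|G|$, take simple functions $0\le s_n\uparrow|G|$. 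Each $s_n$ has finite-measure support because $|G|$ is integrable. Then $s_n\circ f\uparrow|G|\circ f$ pointwise, and the monotone convergence theorem gives
$$\int_{\mathbb{R}}|G\circ f|\,d\lambda\le L^{-1}\int_{\mathbb{R}}|G|\,d\lambda<\infty.$$
\end{itemize}
Together with measurability, this gives $G\circ f\in L(\mathbb{R})$.

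\emph{Main point of care.} The essential step is the measurability of $G\circ f$. Here $G$ is only $S$-measurable, not Borel, so without Theorem \ref{t41} the preimage under $f$ of a Lebesgue measurable set need not be measurable. This is exactly what the $(S,S)$-measurability supplies. The measure estimate in (i) is then a direct application of Lemma \ref{l31}, since $f(f^{-1}A)\subset A$.
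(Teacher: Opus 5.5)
Your proof is correct and follows the paper's argument. You get measurability from Theorem \ref{t41} exactly as the paper does. Your steps (i)--(iii) simply unpack, via the usual indicator--simple function--monotone convergence scheme, the change-of-variables formula $\int_{\mathbb{R}}|G\circ f|\,d\lambda=\int_{\mathbb{R}}|G|\,d(\lambda f^{-1})$ combined with the bound $(\lambda f^{-1})(A)\le L^{-1}\lambda(A)$ from Lemma \ref{l31}, which the paper cites directly.
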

\begin{proof}
By Theorem \ref{t41} $f$ is $(S,S)$-measurable, $G$ is $(S,B(\mathbb{R}))$-measurable, hence $G\circ f$ is $(S,B(\mathbb{R}))$-measurable, i.e., $h$ is Lebesgue measurable. By the change of variables formula \cite{Halmos} $$\int_{\mathbb{R}} |G(f(x))|~d\lambda(x)=\int_{\mathbb{R}}|G(t)|~d(\lambda f^{-1})(t).$$
Here, $\lambda f^{-1}$ denotes the induced measure, which according to the Lemma \ref{l31} satisfies $(\lambda f^{-1})(A)\leq L^{-1}\lambda (A),~ A\in S.$ Therefore,
$$\int_{\mathbb{R}} |G(f(x))|~d\lambda(x)\le L^{-1} \int_{\mathbb{R}}|G(t)|~d\lambda(t)<\infty.$$ This implies the statement.
\end{proof}
\begin{example}
Let $G\in L(\mathbb{R}).$ Then $G(x+\arctan x)\in L(\mathbb{R}).$
\end{example}

This is a consequence of Corollary \ref{c41}, because by the Lagrange theorem
$$|x+\arctan x -y-\arctan y|\ge |x-y|,~x,y\in \mathbb{R}.$$

\begin{remark}
Let $A\in B(\mathbb{R}), f$ be a Lebesgue measurable function such that
$$\exists L>0 ~\forall x,y\in A: |f(x)-f(y)|\ge L|x-y|,$$
and $G\in L(f(A)).$ Then $G\circ f \in L(A).$
\end{remark}
\begin{proof}
The inverse mapping $f^{-1}$ is continuous. Then $f(A)\in B(\mathbb{R})$ as a preimage of a Borel set $A$ under a continuous mapping $f^{-1}$. Similarly to Theorem \ref{t41} the function $f$ is $(S,S)$-measurable. Then $G\circ f$ is Lebesgue measurable, and
$$\int_A |G(f(x))|~d\lambda(x)=\int_{f(A)} |G(t)|~d(\lambda f^{-1})(t)\le L^{-1}\int_{f(A)} |G(t)|~d\lambda(t)<\infty.$$
\end{proof}
\begin{example}

(a) Let $G(x)/\sqrt x\in L[0,1]$. Then $G(x^2)\in L[0,1].$

(b) Let $xG(x)\in L[0,1].$ Then $G(\sqrt x)\in L[0,1].$
\end{example}
\begin{proof}
In both cases, the condition implies that $G$ is Lebesgue measurable in [0,1].

(a) By Theorem \ref{t41} the function $f(x)=x^2$ is
$(S,S)$-measurable in each interval $[\epsilon, 1], 0<\epsilon<1$, and therefore it is $(S,S)$-measurable in $[0,1].$ Then by  changing the variables  and taking into  account  $\frac{d(\lambda f^{-1})}{d\lambda}=\frac{1}{2}t^{-1/2}, t\in (0,1],$ we obtain

$$\int_{[0,1]}|G(x^2)|~d\lambda(x)=\frac{1}{2}\int_{[0,1]}\frac{|G(t)|}{\sqrt t}~d\lambda(t)<\infty.$$

\medskip
(b) By Theorem \ref{t41} the function $f(x)=\sqrt x$ is $(S,S)$-measurable in $[0,1].$ Then by the change of variables and taking into account that $\frac{d(\lambda f^{-1})}{d\lambda}=2t, t\in [0,1],$ we obtain
$$\int_{[0,1]}|G(\sqrt x)|~d\lambda(x)=2\int_{[0,1]}t|G(t)|~d\lambda(t)<\infty.$$
\end{proof}

It is tempting to extend the statement of Theorem 4.1 to inverse-H\"older functions, i.e., functions $g$ such that
$$\exists L>0,~ \alpha>0 ~ \forall x,y \in \mathbb{R} : |g(x)-g(y)| \geq L|x-y|^\alpha. $$
If $\alpha<1$ this condition implies the inverse-Lipschitz property in each finite interval, and therefore, $g$ will possess the $N^{-1}$-property, and if additionally $g$ is Lebesgue measurable, then it will also be $(S,S)$- measurable. For that reason, we will focus on the case $\alpha>1$.

The next statement is presented in \cite{Dovgoshey},  Proposition 10.1.
\begin{lemma}[H\"older continuity of the Cantor staircase $\theta$]\label{lem:theta}\hfill

For all $x,y\in[0,1]$, it holds $$|\theta(x)-\theta(y)|\le |x-y|^{\beta_0}$$ with
$\beta_0:=\log_3 2\in(0,1)$.
\end{lemma}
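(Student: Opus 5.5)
I plan to prove the inequality directly from the self-similar structure of $\theta$, using the ternary construction of the Cantor set. Two facts are used throughout. First, $\theta$ is nondecreasing and continuous on $[0,1]$, with $\theta(0)=0$ and $\theta(1)=1$. Second, $\theta$ is constant on each removed middle third, and on each level-$n$ basic interval $I$ of length $3^{-n}$ (one of the $2^n$ closed intervals remaining after $n$ steps of the construction) it increases by exactly $2^{-n}$. Since $2^{-n}=(3^{-n})^{\beta_0}$, the oscillation of $\theta$ over any level-$n$ basic interval equals $|I|^{\beta_0}$. The whole proof consists of extending this equality on basic intervals to an inequality for arbitrary pairs of points.

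By symmetry I will take $0\le x<y\le 1$ and set $d=y-x$. If $d\ge 1$ the claim is trivial, since $|\theta(x)-\theta(y)|\le 1$. Otherwise I choose $n\ge 0$ with $3^{-(n+1)}\le d<3^{-n}$.

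The key step is to show that $[x,y]$ meets at most two level-$(n+1)$ basic intervals. I will argue this from the gaps in the construction. Two different level-$(n+1)$ basic intervals are separated either by a removed middle third of some level-$n$ basic interval, whose length is $3^{-(n+1)}$, or by a gap of an earlier level, whose length is at least $3^{-n}$. So the distance between any two consecutive level-$(n+1)$ basic intervals is at least $3^{-(n+1)}$. An interval meeting three of them would have to contain a whole basic interval of length $3^{-(n+1)}$ together with a gap on each side of it. Its length would then exceed $2\cdot 3^{-(n+1)}$, and in fact would be at least $3\cdot 3^{-(n+1)}=3^{-n}>d$, which is impossible.

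Since $\theta$ is constant on every gap, the increment $\theta(y)-\theta(x)$ is the sum of the increments of $\theta$ over the basic intervals that $[x,y]$ meets. Each of these increments is at most $2^{-(n+1)}$, so
\[
\theta(y)-\theta(x)\le 2\cdot 2^{-(n+1)}=2^{-n}=(3^{-n})^{\beta_0}.
\]
This falls short of the target by a factor of $3^{\beta_0}=2$, because $d$ may be as small as $3^{-(n+1)}$. I expect this constant to be the main obstacle.

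To remove the factor $2$ I will sharpen the case split. If $[x,y]$ meets only one level-$(n+1)$ basic interval, the increment is bounded by $2^{-(n+1)}=(3^{-(n+1)})^{\beta_0}\le d^{\beta_0}$, and we are done. If it meets two adjacent ones, the gap between them lies inside $[x,y]$. I then split $[x,y]$ at that gap into a left piece of length $u$ and a right piece of length $v$, each lying inside one basic interval, and use induction on $n$ (through the self-similarity $\theta(x/3)=\theta(x)/2$) to bound the pieces by $u^{\beta_0}$ and $v^{\beta_0}$. Since the gap has length at least $3^{-(n+1)}\ge\max(u,v)$, we get $d\ge u+v+\max(u,v)$. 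It remains to verify
\[
u^{\beta_0}+v^{\beta_0}\le\bigl(u+v+\max(u,v)\bigr)^{\beta_0}.
\]
Both sides are homogeneous of degree $\beta_0$, so I normalize to $u=1$, $v=t\le 1$. The inequality becomes $1+t^{\beta_0}\le(2+t)^{\beta_0}$. It holds with equality at $t=0$ and at $t=1$, since $2^{\beta_0}\cdot$ gives $3^{\beta_0}=2$. For $0<t<1$ I will check it by a concavity argument comparing derivatives. Making this induction and the elementary inequality rigorous is the delicate part. Alternatively, one can cite \cite{Dovgoshey} directly, as the paper does.
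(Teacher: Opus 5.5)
The paper gives no proof of this lemma; it quotes \cite{Dovgoshey}, Proposition 10.1. Your self-contained argument is therefore a different route, and its skeleton is sound. The exclusion of three level-$(n+1)$ basic intervals is correct, and so is the one-interval case. So is the split at the gap with $d\ge u+v+\max(u,v)$, once you trim an end of $[x,y]$ that lies in a gap, which does not change $\theta$. The reduction to $1+t^{\beta_0}\le(2+t)^{\beta_0}$ is also correct, and that inequality is tight at $t=1$, so the split loses nothing. Two steps, however, do not work as written.

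\emph{The induction on $n$ is not well-founded.} Take a piece of length $u\le 3^{-(n+1)}$ inside a level-$(n+1)$ basic interval $[c,c+3^{-(n+1)}]$. Rescaling via $\theta(c+3^{-(n+1)}t)=\theta(c)+2^{-(n+1)}\theta(t)$ turns it into a pair in $[0,1]$ at distance $3^{n+1}u$. This identity needs both branches $\theta(t/3)=\theta(t)/2$ and $\theta((2+t)/3)=(1+\theta(t))/2$, not only the first. The distance $3^{n+1}u$ can be any number in $(0,1]$, so the rescaled pair can have any level. No hypothesis ``for smaller $n$'' covers it, and appealing to larger $n$ is an infinite regress.

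A parameter that does decrease is the triadic depth. Prove the inequality for $x<y$ in $3^{-m}\mathbb{Z}\cap[0,1]$ by induction on $m$. Since $d\ge 3^{-m}$ forces $n+1\le m$, the rescaled pair $(t,1)$, with $t=3^{n+1}(\max(x,c)-c)$, lies in $3^{-(m-n-1)}\mathbb{Z}$. The hypothesis therefore applies and gives $\theta(c+3^{-(n+1)})-\theta(\max(x,c))\le u^{\beta_0}$. The right piece is handled the same way. Then pass to arbitrary $x,y$ by density and the continuity of $\theta$.

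\emph{Your justification of $1+t^{\beta_0}\le(2+t)^{\beta_0}$ is wrong, though the inequality is true.} At $t=0$ the inequality is strict, since $1<2^{\beta_0}$. Moreover, $\phi(t):=(2+t)^{\beta_0}-1-t^{\beta_0}$ is convex, not concave: $\phi''>0$. So ``equality at both ends plus concavity'' would argue in the wrong direction. The correct argument is monotonicity. On $(0,1]$ we have $\phi'(t)=\beta_0\bigl((2+t)^{\beta_0-1}-t^{\beta_0-1}\bigr)<0$ because $\beta_0<1$, and $\phi(1)=3^{\beta_0}-2=0$. Hence $\phi\ge 0$ on $[0,1]$.
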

\begin{example}[Continuous, increasing,  and inverse-H\"older function, which is not (S,S)-measurable and has no $N^{-1}$-property]\hfill

The function $$f(x)=\frac{1}{2}(x+\theta(x)), x\in [0,1],$$
from Example 3.1 is a continuous strictly increasing bijection on $[0,1]$, moreover, for the two Cantor's sets we have $f(C)=K$, and $|f(x)-f(y)|\le |x-y|^{\beta_0}$ as a convex combination of the two functions satisfying this H\"older condition.

\pagebreak

Extend $f$ to a map $g:\mathbb{R}\to\mathbb{R}$ that stays globally H\"older of exponent
$\beta_0$ and remains a bijection of $\mathbb{R}$ (so it must grow \emph{sub-linearly}
outside $[0,1]$):
\[
g(x):=
\begin{cases}
1+(x-1)^{\beta_0}, & x>1,\\[2pt]
\tfrac12\bigl(x+\theta(x)\bigr), & 0\le x\le 1,\\[2pt]
-|x|^{\beta_0}, & x<0.
\end{cases}
\]
It is continuous and strictly increasing, therefore a bijection $\mathbb{R}\to\mathbb{R}$; indeed, the pieces match at the junctions, $g(0^-)=0=f(0)$ and $g(1^+)=1=f(1)$, and $g|_{[0,1]}=f$, so $g(C)=f(C)=K$.

We now check that $g$ is globally H\"older of exponent $\beta_0$. Recall the elementary inequality $|s^{\beta_0}-t^{\beta_0}|\le|s-t|^{\beta_0}$ for $s,t\ge0$ (subadditivity of $t\mapsto t^{\beta_0}$, $\beta_0\in(0,1)$). On each piece $g$ is H\"older of exponent $\beta_0$ with constant $1$: on $[0,1]$ this is the bound $|f(x)-f(y)|\le|x-y|^{\beta_0}$ above; on $[1,\infty)$ one has $|g(x)-g(y)|=|(x-1)^{\beta_0}-(y-1)^{\beta_0}|\le|x-y|^{\beta_0}$; and on $(-\infty,0]$ one has $|g(x)-g(y)|=\bigl||x|^{\beta_0}-|y|^{\beta_0}\bigr|\le|x-y|^{\beta_0}$. For arbitrary $x<y$ we split $[x,y]$ at whichever of the breakpoints $0,1$ it contains; in the extreme case $x<0<1<y$,
\[
\begin{aligned}
|g(y)-g(x)| &\le|g(y)-g(1)|+|g(1)-g(0)|+|g(0)-g(x)|\\
&\le(y-1)^{\beta_0}+1+(-x)^{\beta_0}\le 3|y-x|^{\beta_0},
\end{aligned}
\]
because each segment has length at most $y-x$ and $y-x\ge1$ gives $1\le|y-x|^{\beta_0}$; the cases with fewer breakpoints are identical with one or two terms. Hence
$$|g(y)-g(x)|\le 3|y-x|^{\beta_0}, \quad x, y \in \mathbb{R}.$$
Now, set
\[
\psi:=g^{-1}:\mathbb{R}\to\mathbb{R}
\]
which is again a continuous, strictly increasing bijection. Inverting the global bound $|g(u)-g(v)|\le 3|u-v|^{\beta_0}$ (set $x=g(u), y=g(v)$ and raise to the power $1/\beta_0$) yields $$|\psi(x)-\psi(y)|\ge 3^{-\alpha}|x-y|^\alpha, \quad x,y\in \mathbb{R},$$ with $\alpha=\beta_0^{-1}=\log_2 3>1$.

This function does not have $N^{-1}$-property, because $\lambda(C)=0$ but $\lambda(\psi^{-1}C)=\lambda(K)=1/2>0$.

Explain why $\psi$ is not $(S,S)$-measurable. Indeed, the set $K$ of positive Lebesgue measure contains a non-measurable subset $V$, then $\psi(V)\subset \psi(K)=C$, C is a null 
set, hence $\psi(V)\in S.$ But $\psi$ is injective, so $\psi^{-1}(\psi(V))=V\notin S.$
\end{example}

Note that a similar example can be constructed for any inverse-H\"older exponent $\alpha>1$, using instead of $K$ other Cantor's sets with positive Lebesgue measure; see \cite{Devos} for the H\"older continuity of a generalized Cantor staircase. Thus, for a continuous and inverse-H\"older function $f$ and a Lebesgue measurable $F$, the composition $F\circ f$ need not be Lebesgue measurable. Of course, there is no problem with a Lebesgue measurable internal function and a Borel external one. 

The absolute continuity of an internal function helps to provide the integrability of the composition.
\begin{theorem}[Composition of monotone, absolutely continuous function and Lebesgue measurable function]\hfill

Let a monotone function $f\in AC[a,b]$ with $f'\neq 0$ almost everywhere, $c$ and $d$ be the minimum and maximum values of $f$, respectively, and $$\frac{G(t)}{|f'(f^{-1}(t))|}\in L[c,d].$$ Then $G\circ f \in L[a,b]$ and $$\int_{[a,b]} G(f(x)) d\lambda (x)=\int_{[c,d]} \frac {G(t)}{|f'(f^{-1}(t))|}d\lambda (t).$$

\end{theorem}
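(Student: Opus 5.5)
Since $f$ is monotone and $f'\neq 0$ a.e., $f$ is strictly monotone, hence a homeomorphism of $[a,b]$ onto $[c,d]$. If two points had the same value, $f$ would be constant on the interval between them and $f'=0$ there on a set of positive measure. We treat the increasing case; the decreasing case reduces to it by $x\mapsto a+b-x$. The plan has three steps. First, identify the induced measure $\lambda f^{-1}$ on $[c,d]$ and show it is absolutely continuous with density $1/|f'(f^{-1}(t))|$. Second, deduce that $f$ is $(S,S)$-measurable, so that $G\circ f$ is Lebesgue measurable. Third, apply the change of variables formula \cite{Halmos} as in Corollary \ref{c41}, first to $|G|$ and then to $G$.

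\textbf{Step 1.} Put $\psi:=f^{-1}:[c,d]\to[a,b]$, which is continuous and increasing. For an interval, $(\lambda f^{-1})((\alpha,\beta])=\psi(\beta)-\psi(\alpha)$. So we must show $\psi\in AC[c,d]$ with $\psi'(t)=1/f'(\psi(t))$ a.e. Let $N_0=\{x: f'(x)\text{ does not exist or equals }0\}$, so $\lambda(N_0)=0$. At every $t$ with $\psi(t)\notin N_0$, the inverse-function rule gives $\psi'(t)=1/f'(\psi(t))$. The set of remaining $t$ is $f(N_0)$, and it is null because $f\in AC$ has the Luzin $N$-property. Hence $\psi'$ exists a.e. and equals $1/|f'\circ\psi|$ there.

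For absolute continuity of $\psi$, we show that the Lebesgue--Stieltjes measure $d\psi$ has no singular part. By the Lebesgue decomposition, the singular part of $d\psi$ is concentrated on $\{t:\psi'(t)=+\infty\}$. This set is contained in $f(Z)$ with $Z=\{x:f'(x)=0\}$. We also have $\lambda(Z)=0$, and since $f\in AC$ the image $f(Z)$ is null. By the Luzin $N$-property of $\psi$ restricted to this image, $d\psi(f(Z))=\lambda(Z)=0$. Therefore $d\psi$ is absolutely continuous and
\[
(\lambda f^{-1})(E)=\int_E \frac{d\lambda(t)}{|f'(f^{-1}(t))|}
\]
for Borel $E$. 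This identification is the main obstacle: it needs care with the exceptional null sets and with the fact that $1/|f'\circ\psi|$ may be unbounded. An alternative route is the classical substitution $\int_{[c,d]}\mathds{1}_E\,d\lambda=\int_{[a,b]}\mathds{1}_E(f)f'\,d\lambda$, applied to $E=f(A)$ with $A=\{x:f'(x)>0\}$.

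\textbf{Step 2.} Absolute continuity of $\lambda f^{-1}$ means exactly that $\lambda(C)=0$ implies $\lambda(f^{-1}C)=0$, i.e.\ $f$ has the $N^{-1}$-property. We then argue as in Theorem \ref{t41}. Write $A\in S$ as $A=B\cup C$ with $B$ Borel and $\lambda(C)=0$. Then $f^{-1}B$ is Borel by continuity of $f$, and $f^{-1}C$ is null. So $f^{-1}A\in S$, and $G\circ f$ is Lebesgue measurable for Lebesgue measurable $G$.

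\textbf{Step 3.} By the change of variables formula with the density from Step 1,
\[
\int_{[a,b]}|G(f(x))|\,d\lambda(x)=\int_{[c,d]}\frac{|G(t)|}{|f'(f^{-1}(t))|}\,d\lambda(t)<\infty .
\]
Hence $G\circ f\in L[a,b]$. Repeating the computation with $G^{+}$ and $G^{-}$ and subtracting gives the stated equality.
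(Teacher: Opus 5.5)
Your proof is correct. Its skeleton matches the paper's: identify $\lambda f^{-1}$ as absolutely continuous with density $1/|f'\circ f^{-1}|$, deduce $(S,S)$-measurability, then change variables. The key step, $f^{-1}\in AC[c,d]$, is done by a genuinely different argument.

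\textbf{The paper's route.} It uses the area formula $\lambda(f(A))=\int_A|f'|\,d\lambda$ to show that $f^{-1}$ has the Luzin $N$-property, and then invokes the Banach--Zarecki theorem. It obtains the $(S,S)$-measurability of $f$ separately, from the inverse-Lipschitz covering in the proof of Theorem \ref{t31}. So the $N^{-1}$-property of $f$ is in effect established twice.

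\textbf{Your route.} You kill the singular part of $d\psi$ directly. It is carried by $\{\psi'=+\infty\}$. That set equals $f(Z)$ with $Z=\{f'=0\}$, since the difference quotient of $\psi$ at $t$ is the reciprocal of that of $f$ at $\psi(t)$ and $\psi$ is a homeomorphism. Finally, $d\psi(f(Z))=\lambda(\psi(f(Z)))=\lambda(Z)=0$ by hypothesis. The $N^{-1}$-property, and hence $(S,S)$-measurability, then follows from the absolute continuity of $\lambda f^{-1}$. So you need neither the area formula, nor Banach--Zarecki, nor Theorem \ref{t31}, and it is transparent exactly where $f'\neq0$ a.e.\ enters.

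\textbf{Attribution to fix.} The fact you rely on does not follow from the Lebesgue decomposition as you state it. That the singular part of a Lebesgue--Stieltjes measure is concentrated where the two-sided derivative is $+\infty$ is de la Vall\'ee Poussin's theorem. It is proved by a Vitali-type covering argument for $\mu_s$, which in dimension one works for non-centered intervals. Cite it as such.

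\textbf{Phrasing to fix.} The equality $d\psi(f(Z))=\lambda(Z)$ uses no Luzin property of $\psi$; invoking one would be circular. It is simply the identity $d\psi(E)=\lambda(\psi(E))$ for a continuous increasing bijection, combined with $\psi(f(Z))=Z$.
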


\begin{proof}
The function $f$ satisfies the conditions of Theorem \ref{t31}, and from the proof of the theorem it follows that $f$ is $(S,S)$-measurable. This makes it possible to change the variables in the integral of $|G\circ f|.$

Since $f$ is monotone, $f\in AC[a,b]$, and $f'\neq0$ almost everywhere, $f$ cannot be constant on any subinterval (otherwise $f'$ would vanish on a set of positive measure), so $f$ is strictly monotone. Being also continuous, $f$ is a strictly monotone bijection of $[a,b]$ onto $f([a,b])=[c,d]$, and its inverse $f^{-1}$ is a continuous, strictly monotone bijection of $[c,d]$ onto $[a,b].$

Moreover, $f'\neq0$ a.e. means $|f'|>0$ a.e.; by the area formula $\lambda(f(A))=\int_A|f'|\,d\lambda$, valid for a monotone $f\in AC[a,b]$, the map $f$ sends every set of positive measure to a set of positive measure, so $f^{-1}$ maps null sets to null sets. Hence, by the Banach--Zareckii theorem \cite{Bogachev}, p. 388, $f^{-1}\in AC[c,d]$ with $(f^{-1})'(t)=1/f'(f^{-1}(t))$ almost everywhere. Consequently the induced measure $\lambda f^{-1}$ is absolutely continuous w.r.t. $\lambda$ on $[c,d]$ with the Radon-Nikodym derivative $1/|f'(f^{-1}(t))|.$ The latter function is well-defined, because for the null set  $N$ of points where either $f'$ does not exist or it equals 0, the set $\{t: f^{-1}(t)\in N\}$  has Lebesgue measure 0 as a preimage of $N$ under the mapping $f^{-1}.$ This mapping is $(S,S)$-measurable as an absolutely continuous function with the derivative which differs from 0 a.e.

Thus, $$\int_{[a,b]} |G(f(x))| d\lambda (x)=\int_{[c,d]} |G(t)|d(\lambda f^{-1})(t)= \int_{[c,d]} \frac {|G(t)|}{|f'(f^{-1}(t))|}d\lambda (t)<\infty.$$
This implies the statement.
\end{proof}

The next example is a particular case of the latter theorem.

\begin{example}
Let $\alpha>0$ and $G$ be a real function such that $|t|^{(1/\alpha) -1}G(t) \in L(\mathbb{R})$. Then $G(|x|^\alpha sign~ x) \in L(\mathbb{R})$ and $$\int_\mathbb{R} G(|x|^\alpha sign ~x)~d\lambda(x)=\alpha^{-1}\int_\mathbb{R}|t|^{(1/\alpha)-1}G(t) ~d\lambda(t).$$
\end{example}

\section{Conclusion}

We found conditions which ensure that a composition of functions is Riemann- or Lebesgue integrable. In particular, Theorems 2.3 and 2.5 provided nontrivial criteria for the Riemann integrability. Various examples and counterexamples were presented. The authors intend to expand the  results to  multiple integrals.

\begin{acknowledgement}
The authors thank Dr. G. Riabov (Kyiv) and BS O. Liubimov (Kyiv) for fruitful discussions.
\end{acknowledgement}

\begin{acknowledgement}[title={Conflict of interest}]
The authors declare no conflict of interest.
\end{acknowledgement}

\begin{funding}
The authors declare that during the preparation of this manuscript no funds, grants or other support were withdrawn.
\end{funding}

\begin{acknowledgement}[title={Author contribution}]
All authors have contributed equally to the paper.
\end{acknowledgement}
\bibliographystyle{VMSTA/bib/vmsta2-mathphys}
\bibliography{main}

\end{document}